\newtheoremstyle{assumptionstyle}{0in}{0in}{\normalfont}{0.5em}{\itshape}{:}{.5em}{}
\newtheoremstyle{propertystyle}{0in}{0in}{\normalfont}{}{\bf}{:}{.5em}{}
\theoremstyle{plain}
\newtheorem{theorem}{Theorem}
\theoremstyle{assumptionstyle}
\newtheorem{assumption}{Assumption}
\theoremstyle{propertystyle}
\newtheorem{property}{Property}
\begin{document}
\setlength{\abovedisplayskip}{3pt}
\setlength{\belowdisplayskip}{3pt}

%IEEE Transactions on Robotics - 6,6,10,10,17 pages of 5 sampled
\title{Reactive Trajectory Generation in an Unknown Environment}

\author{Kenan Cole,% <-this % stops a space
\thanks{Kenan Cole is a graduate student in the Mechanical and Aerospace Engineering Department,     The George Washington University, Washington, DC 20052, USA}
        %{\tt\small albert.author@papercept.net}}%
        ~Adam M. Wickenheiser%
\thanks{Adam M. Wickenheiser is an assistant professor in the Mechanical and Aerospace Engineering Department, The George Washington University,
        Washington, DC 20052, USA}}
        
\maketitle

% Abstract
\begin{abstract}
Autonomous trajectory generation for unmanned aerial vehicles (UAVs) in unknown environments continues to be an important research area as UAVs become more prolific. In this paper, we develop a trajectory generation algorithm for a vehicle in an unknown environment with wind disturbances, that relies only on the vehicle's on-board distance sensors and communication with other vehicles within a finite region to generate a collision-free trajectory that is continuous up to the fourth derivative. The proposed trajectory generation algorithm can be used in conjunction with high-level planners and low-level motion controllers, as demonstrated. The algorithm provides guarantees that the trajectory does not violate the vehicle's thrust limitation, sensor constraints, or a user-defined clearance radius around other vehicles and obstacles. Simulation results of a quadrotor moving through an unknown environment with moving obstacles demonstrates the trajectory generation performance. 
\end{abstract}

% Begin paper
\section{Introduction}
The push for autonomous and beyond-line-of-sight (BLOS) operation of UAVs is becoming more of a reality with improved sensors both commercially \cite{Echodyne} and academically \cite{Newmeyer2016}. Our research examines formations of vehicles operating in unknown environments where the vehicles may be required to move relative to or independent of one another. Collision-free trajectory generation to a goal position for each vehicle is the focus of this paper.  

There are several approaches for trajectory generation in the presence of obstacles and/or vehicles, including global planners, local and reactive planners, and formation controllers. Global optimization techniques are prevalent \cite{Turpin2014,Bhattacharya2015,VanLoock2014} because they can ensure convergence on the goal position, assuming a known environment. This is not possible for applications where the environment is dynamic and unknown.

Local planners examine a shorter time window to reduce the computational expense and can address obstacles that may not be known a priori \cite{AlonsoMora2015,Shiller2013}.  One of the main drawbacks to the local planners is the lack of an overall safety or convergence guarantee since the optimization is occurring for short time windows for only the closest obstacles. 

Reactive controllers, which are a type of local planner, employ algorithms that generate the trajectory directly as the environment is sensed \cite{Chunyu2010, Tang2013, Matveev2015}. One drawback is that they do not guarantee smoothness of the trajectory. This is problematic because vehicle thrust constraints may be violated and higher derivatives may not be bounded, which can violate vehicle controller requirements. 

Formation controllers can provide solutions for collision avoidance with other vehicles in a variety of ways including global optimization where the environment must be known \cite{Weihua2011,Ayanian2010} or potential fields to guide the vehicles \cite{Chang2015}. In some cases avoidance is achieved by navigating the entire formation around the obstacle(s) \cite{Zhang2010, Sarkar2013, AlonsoMora2015}. For the present scenario, the formation can be of varying size and distribution, which is more similar to swarming behavior such as \cite{Belta2004}, which does not discuss obstacle avoidance, or \cite{Hedjar2014}, which relies on a distributed optimization to avoid obstacles and maintain the formation. In our scenario we seek to use the same trajectory generation for vehicles that have been re-tasked and are no longer part of the formation, so the avoidance must be applicable to obstacles and vehicles alike. 

In addition to collision avoidance, the vehicle's physical limitations such as sensor range (\cite{Tang2013}, \cite{Chunyu2010}, \cite{Choi2013}), maximum velocity (\cite{Matveev2015}, \cite{Tang2013}), clearance radius (\cite{AlonsoMora2015}, \cite{Matveev2015}, \cite{Choi2013}, \cite{Chunyu2010}), and turning rate (\cite{Choi2013}, \cite{Chunyu2010}) must be considered. All of these constraints impact the generation of a feasible trajectory, and to date, no trajectory planner accounts for all of these constraints simultaneously. 

%The effect of an external wind on the vehicle is likely not trivial in terms of the drag, thrust, and blade flapping \cite{Pounds2006}, \cite{Hoffmann}.

Similarly, none of the cases examined consider the disturbance as input to the trajectory generation. Disturbance inclusion is much more prevalent in vehicle controllers to show ultimate bounded or asymptotic stability \cite{Cabecinhas2013,Waslander2009, Sydney2013,FischerNL2014}. In order to achieve these stability guarantees though, the controllers require that the desired trajectory higher derivatives exist and are bounded. To meet these criteria, the control authority to overcome the disturbance must also be considered.

Our goal is to address each of these areas: collision avoidance in unknown environments, smooth trajectories (and derivatives) that do not violate vehicle thrust or sensor constraints, inclusion of the bounded disturbance, and setting maximum velocity bounds. The problem definition, properties, and assumptions are given in Sec.~\ref{SecProbDef}. The trajectory generation is defined in Sec.~\ref{SecTrajGen}, describing the identification of potential collisions and the algorithm to adjust heading/velocity to clear the obstacle. Section \ref{SecSafetyGuarantee} provides the analysis for solving the trajectory curve timespan and bounding the vehicle's maximum safe cruise velocity. Section \ref{SecVehController} defines the vehicle dynamics and controller for the simulation case study presented in Sec.~\ref{SecSim}. Finally Sec.~\ref{SecConclusion} provides concluding remarks. 

\section{Problem Definition}
\label{SecProbDef}
We define a trajectory generation algorithm with the following properties and assumptions for an environment similar to Fig.~\ref{FigDetObsInPath}. 

\subsection{Algorithm Properties}
\label{SubSecAlgProps}
\begin{property} Generation of a smooth desired trajectory $\mathbf{p}_d \in \mathbb{R}^3$ where $\mathbf{p}_d^{(i)} \in \mathbb{R}^3,~\forall i=0,1,\ldots,n$ exist, are bounded, and respect the vehicle's maximum thrust, $f_{max}$, for a translational disturbance of unknown direction and bounded magnitude, $||\mathbf{d}_p|| \leq d_{p,max}$.
\end{property}
\begin{property} Clearance of all obstacles and vehicles by a user-defined clearance radius, $r_c$, which takes into account vehicle size, and measurement, estimation, and tracking errors.
\end{property}
%\begin{property} Determination of maximum cruise velocity, $v_c$, for safe navigation in the environment for $f_{max}$, $d_{p,max}$, sensor/communication range, $r_s$ and $r_a$, and sensor/communications update rate $\Delta T_s$ and $\Delta T_a$.
%\end{property}

\subsection{Algorithm Assumptions}
\begin{assumption}
\label{AssumptionPlanar}
	Vehicle desired trajectories and obstacle motions are planar, but vehicle dynamics are not restricted to be planar.
	\end{assumption}
	\begin{assumption}
	\label{AssumpVehicles}
	 Vehicles are finite in number and heterogeneous in physical parameters (mass, max thrust, etc) and importance (i.e. higher valued asset).  %For example, a more valuable vehicle could be set to have a larger clearance radius $r_c$ to result in a more conservative trajectory.
	\end{assumption}
	%\item Vehicle orientation is indepedent of vehicle heading. 
	\begin{assumption}
	 \label{AssumpSensorComms} Vehicles sensor and communication sample periods, $\Delta T_s = \Delta T_a$, and ranges, $r_s = r_a$ are finite, equal, and provide perfect information. 
	 %\label{AssumpSensorComms} Vehicles sensor and communication sample periods and ranges are equal and given by $\Delta T_s$ and $r_s$, respectively. Within these limitations, the sensor and inter-vehicle communications provide perfect distance and velocity information. 
	\end{assumption}
	\begin{assumption}
	 \label{AssumpVehCommsInfo} Vehicles share current position and heading information when in range using wireless communications.
	\end{assumption} %: $[\mathbf{p}_j,~\mathbf{\dot{p}}_j,~d_{turn},~ID,~r_c]$ where $d_{turn}$ and $ID$ are related to vehicle maneuvering (see Sec.~\ref{SubSecVehManeuver}).
	\begin{assumption}
	 Wind disturbances are bounded, time-varying, and planar.
	\end{assumption}
	\begin{assumption}
	 The clearance radius $r_c$ ensures there are no aerodynamic interactions between one vehicle and another or with obstacles.
	\end{assumption}% effects. It is recognized that close vehicle interactions could adversely affect the vehicle dynamics \cite{Powers2013}. 
	
	%\item Obstacle positioning does not violate vehicle clearance radius for safe passage
	\begin{assumption}
	\label{AssumpLessCapable}
	The obstacles are of finite size and number, in plane, and move with constant velocity (less than vehicle velocity) and heading. The obstacle separation does not prevent the vehicles from moving between them.
	\end{assumption}
	%\item Vehicle estimates obstacle velocity and direction information 	
	%\begin{assumption}
	%Goal positions can be time-varying and are directed by a separate algorithm
	%\end{assumption}
	\begin{assumption}
	\label{AssumpGoalPos}
	 Goal positions are not too close to obstacles or each other to violate vehicle clearance radii and are not infinitely far from the coordinate origin.
	\end{assumption}
\begin{figure}
	\begin{center}
		\includegraphics[width=2in]{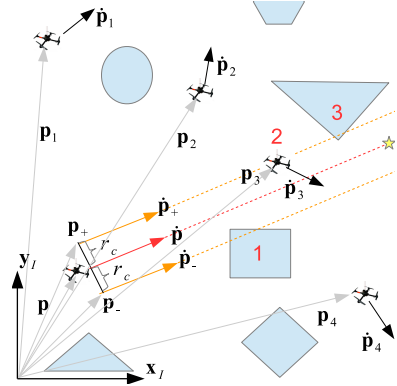}
		\caption{\label{FigDetObsInPath} Representative environment where a vehicle must navigate around other vehicles and obstacles to reach a goal position (yellow star). Two offset vectors, $\mathbf{p}_+$ and $\mathbf{p}_-$ are shown to account for the desired clearance radius, $r_c$. The vehicle prioritizes the potential collisions based on distance and heading angle.}
	\end{center}
\end{figure}

\section{Trajectory Generation}
\label{SecTrajGen}
The trajectory generation algorithm starts with the vehicle either at rest or heading towards the goal, $\mathbf{p}_g$, at maximum cruise velocity, $v_c$ (see Sec.~\ref{SecSafetyGuarantee}). As the vehicle moves in the environment, it compiles its sensor inputs to determine the most imminent threats to safety and smoothly adjusts heading and/or velocity accordingly. The vehicle only makes velocity adjustments when there are potentially both vehicle and obstacle collisions.   

\subsection{Ranking Vehicles' Maneuverability}
\label{SubSecVehManeuver}
When two or more vehicles come within communication range of each other, they exchanges cruise velocity, $v_c$, information to determine which vehicles maneuver and which vehicles stay on course. In accordance with Assumption \ref{AssumpLessCapable}, vehicles with larger $v_c$ maneuver around vehicles with smaller $v_c$. If the vehicles have the same $v_c$, then the vehicles are ranked by $ID$. Lower $ID$ values maneuver around vehicles with higher $ID$ values, forming the set $ID_{mnvr} \subseteq ID_{near}$, where $ID_{near} = [ID_1\cdots ID_m]$ is the set of all vehicles within $r_a$. %Additionally, the vehicle determines if it will pass within $r_c$ of another vehicle, generating the subset $ID_{int} \subseteq ID_{near}$. The vehicle $ID$ values that the current vehicle is responsible for maneuvering around are, $ID_{mnvr} = ID_{veh} \cap ID_{int}$.

\subsection{Obstacle and vehicle collision identification}
\label{SubSecObsVehIdentify}
The vehicle uses distance and angle to determine the most imminent threats to safety. We assume that the sensor provides information equally in all directions. The sensor output is a matrix of angles (relative to vehicle heading) and distances to nearby obstacles. The sensor scan information is used to distinguish different obstacles, each of which is given a unique identifier, $ID$, by the vehicle. The details of that algorithm are not presented here, but the algorithm looks at discontinuities in range and angle to separate the obstacles.

The inter-vehicle communications provide coordinate positions, $\mathbf{p}_j$, velocity, $\mathbf{\dot{p}}_j $, max cruise velocity, $v_c$, and $ID$. The information from the sensor and vehicles is combined in one matrix, $\mathbf{O}$, that tabulates the heading and distance to all the sensed obstacle points and all vehicles in $ID_{mnvr}$. Equation \ref{EqOneMatrixO} defines $\mathbf{O}$, where other vehicles and obstacles are both treated as obstacles:
\begin{equation}
	\label{EqOneMatrixO}
	\mathbf{O} = \left[\begin{array}{ccc}
						\theta_{1} & r_{1} & ID_1\\
						\vdots & \vdots & \vdots\\
						 \theta_{n_1} & r_{n_1} & ID_1 \\

						\vdots & \vdots & \vdots \\
						\theta_{n_1+\cdots +n_{k-1}+1} & r_{n_1+\cdots +n_{k-1}+1} &ID_k \\
						
				\vdots & \vdots & \vdots \\		
						
						 \theta_{n_1+\cdots +n_k} & r_{n_1+\cdots +n_k} &ID_k 
						\end{array}
						\right]
\end{equation}
To determine if there are obstacles along its current heading that violate $r_c$, the vehicle generates two offset vectors parallel to $\mathbf{\dot{p}}$, as shown in Fig.~\ref{FigDetObsInPath}. The relative heading angles to the sensed points from these offset vectors are added to $\mathbf{O}$ to generate $\mathbf{O}_{aug}$:
\begin{equation}
	\label{EqOneMatrixOAug}
	\mathbf{O}_{aug} = \left[ \mathbf{O},~ \left[\begin{array}{cc}
						\theta_{+,1} & \theta_{-,1} \\
						 \vdots & \vdots \\
						\theta_{+,n_1+\cdots +n_k} & \theta_{-,n_1+\cdots +n_k} 
						\end{array}
						\right] \right]
\end{equation}

\noindent The vehicle uses $\mathbf{O}_{aug}$ to identify the $ID$s of the closest sensed point, $ID_r$, and the point most closely aligned to the current or offset heading, $ID_{\theta}$.

\subsection{Heading Change Definition}
\label{SecVehHeadChange}
The obstacles (or vehicles) identified by $ID_r$ and $ID_{\theta}$ are used to determine the heading changes. Each obstacle $ID$ has a corresponding number of sensed points $n_{ID_r}$ and $n_{ID_{\theta}}$ from Eq.~\ref{EqOneMatrixO}. The analysis that follows is for both $ID_r$ and $ID_{\theta}$, but for ease of notation, the $r$ or $\theta$ subscripts are removed.

The first determination is the circumnavigation direction, $\mathbf{z}_{\phi}$, which is held constant while traversing an obstacle and minimizes heading change around obstacles. It is defined as $\pm \mathbf{z}_I$ where $\mathbf{z}_I$ is the inertial frame $z$ axis. The vehicle categorizes obstacles as ``slow'', $v_{o} \leq K_o v_{c}$, or ``fast'', $v_{o} > K_o v_c$ where $0 < K_o < 1$ is a user-defined variable and $v_o = ||\mathbf{\dot{p}}_o||$ is the obstacle velocity magnitude. For stationary and ``slow'' moving obstacles the circumnavigation direction is determined by Eq.~\ref{EqRotSignSlow} and shown in Fig.~\ref{FigDetermineRS}. For avoiding ``fast" moving obstacles, the vehicle goes behind them to reduce unnecessarily lengthy maneuvers as defined in Eq.~\ref{EqRotSignFast}: 
\begin{align}
\label{EqRotSignSlow}
	\mathbf{z}_{\phi,slow} &= \mathrm{sign}\left(\left(\mathbf{p}_{min} \times \mathbf{\dot{p}}_d\right) \cdot \mathbf{z}_I\right) \mathbf{z}_I \\
	\label{EqRotSignFast}
	\mathbf{z}_{\phi,fast} &= \mathrm{sign}\left((\mathbf{\dot{p}}_o \times \mathbf{\dot{p}}_d) \cdot \mathbf{z}_I\right) \mathbf{z}_I 
\end{align}

\begin{figure}
	\begin{center}
		\includegraphics[width=3in]{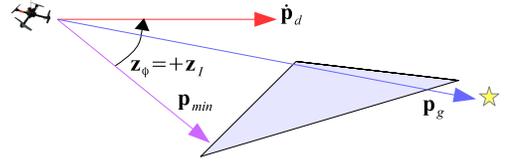}
		\caption{\label{FigDetermineRS} Circumnavigation direction, $\mathbf{z}_{\phi}$, to minimize heading change.}
	\end{center}
\end{figure}

Next, the vehicle examines the current, $\mathbf{p}_{o,i}$, and projected (if obstacle velocity has been estimated), $\mathbf{p}'_{o,i}$, obstacle positions relative to the vehicle. The projected obstacle position is given by Eq.~\ref{EqProjectObs} and the distances to the current and projected obstacle points is given in Eq.~\ref{EqPminCurrent}: 
\begin{align}
	\label{EqProjectObs}
	\mathbf{p}_{o,i}' &= \mathbf{\dot{p}}_{o,i} \Delta T_s + \mathbf{p}_{o,i} \\
	\label{EqPminCurrent}
	r_{i,j} &= ||\mathbf{r}_{i,j}||=||\mathbf{p}_{o,i}-\mathbf{p}_{d}||
\end{align}

\noindent where $~i=1,\cdots, n_{ID}$ and $j=1,2$. The vehicle defines the heading change as (see Fig.~\ref{FigDetermineDeltaPhi})
{\small
\begin{equation}
	\label{EqDelPhiPrime}
	\Delta \phi_l = \left\{\begin{array}{ll}
	\max\limits_{i,j}\left(\cos^{-1} \left(\frac{(\mathbf{p}_{h,i,j} - \mathbf{p}_{d})\cdot \mathbf{\dot{p}}_{d}}{||\mathbf{p}_{h,i,j} - \mathbf{p}_{d}||~||\mathbf{\dot{p}}_{d}||}\right)\right), & \mathbf{z}_{\phi} = \mathbf{z}_I \\
	\\
	\min\limits_{i,j}\left(\cos^{-1} \left(\frac{(\mathbf{p}_{h,i,j} - \mathbf{p}_{d})\cdot \mathbf{\dot{p}}_{d}}{||\mathbf{p}_{h,i,j} - \mathbf{p}_{d}||~||\mathbf{\dot{p}}_{d}||}\right)\right), & \mathbf{z}_{\phi} = -\mathbf{z}_I
	\end{array}
	\right.
\end{equation}
}
\noindent where $l=r,\theta$ and
\begin{align}
	\label{EqPh}
	\mathbf{p}_{h,i,j} &= \mathbf{p}_{d} + p_{h,i,j}\mathbf{R}_{\phi_{h,i,j}} \mathbf{r}_{i,j} \\
	\phi_{h,i,j} &= \sin^{-1} \frac{r_c}{r_{i,j}} \\
		p_{h,i,j} &= \sqrt{(r_{i,j})^2 - r_c^2}		
\end{align}

\noindent where $\mathbf{R}_{\phi_{h,i,j}}$ is the rotation matrix for a $\phi_{h,i,j}$ rotation about $\mathbf{z}_I$. The circumnavigation direction for $\Delta \phi_l$ is
\begin{equation}
	\mathbf{z}_{\Delta \phi_l} = \mathrm{sign}\left(\left(\mathbf{\dot{p}}_{d} \times (\mathbf{p}_{h} - \mathbf{p}_{d})\right) \cdot \mathbf{z}_I\right) \mathbf{z}_I 
\end{equation}
This produces two candidate heading changes, $\Delta \phi_r$ and $\Delta \phi_{\theta}$. The third candidate heading change is to the goal position as given by Eq.~\ref{EqGoalPosDelPhi}, where the circumnavigation direction is given by Eq.~\ref{EqRotSignGoal}:
\begin{align}
	\label{EqGoalPosDelPhi}
			\Delta \phi_g &= \cos^{-1} \left(\frac{(\mathbf{p}_g - \mathbf{p}_{d})\cdot \mathbf{\dot{p}}_{d}}{||\mathbf{p}_g - \mathbf{p}_{d}||~||\mathbf{\dot{p}}_{d}||}\right) \\
\label{EqRotSignGoal}
	\mathbf{z}_{\phi,g} &= \mathrm{sign}\left(\left(\mathbf{\dot{p}}_{d} \times (\mathbf{p}_g - \mathbf{p}_{d})\right) \cdot \mathbf{z}_I\right)\mathbf{z}_I 
\end{align}
The three candidate heading changes are used to determine the actual heading change in Eq.~\ref{EqDelPhiFinal}, and Figure \ref{FigDeltaPhiFinal} shows two example cases. The conditions in Eq.~\ref{EqDelPhiFinal} are evaluated in sequence.
{\small
\begin{multline}
\label{EqDelPhiFinal}
	\Delta \phi = \\
	\left\{\begin{array}{ll}
	\Delta \phi_g, & ||\mathbf{p}_d-\mathbf{p}_g|| < \underset{i}{\mathrm{min}}(r_{i,1}) \\
	\mathrm{max}\left(\Delta \phi_{r},\Delta \phi_{\theta},\Delta \phi_g\right), & \mathbf{z}_{\phi,r} = \mathbf{z}_{\phi,\theta} = \mathbf{z}_I \\
	\mathrm{min}\left(\Delta \phi_{r},\Delta \phi_{\theta},\Delta \phi_g\right) , & \mathbf{z}_{\phi,r} = \mathbf{z}_{\phi,\theta} = -\mathbf{z}_I\\
	\Delta \phi_r, & \Delta \phi_{min} > \Delta \phi_{max} \\
	\Delta \phi_g, & \Delta \phi_{min} \leq \Delta \phi_g \leq \Delta \phi_{max} \\
	\mathrm{argmin} \left(|\Delta \phi_r|,|\Delta \phi_{\theta}|\right), & \mathrm{otherwise}
	\end{array}
	\right.
\end{multline}
}
\noindent where $r_{stop}$ is 
{\small
 \begin{align}
\label{EqDeltaPhiMaxMin1}
	\Delta \phi_{min} &= \Delta \phi_r, ~~\Delta \phi_{max} = \Delta \phi_{\theta},~~\mathrm{for}~\mathbf{z}_{\phi,r} = \mathbf{z}_I \\
	\Delta \phi_{min} &= \Delta \phi_{\theta}, ~~\Delta \phi_{max} = \Delta \phi_r,~~\mathrm{for}~\mathbf{z}_{\phi,r} = -\mathbf{z}_I 		
\end{align}
}

For cases where $\mathbf{z}_{\phi,r} = \mathbf{z}_{\phi,\theta}$ and the maximum heading change corresponds to an obstacle (i.e.~not other vehicles or the goal position), the vehicle also determines if an additional heading change is necessary to match its component velocity in the direction of the obstacle velocity to $v_o$. The vehicle uses $\Delta \phi_l$ from Eq.~\ref{EqDelPhiPrime} to determine the magnitude of the vehicle velocity in the direction of the obstacle velocity, $v_{vo}$: 
\begin{align}
	\mathbf{\dot{p}}_{d}' &= \mathbf{R}_{\Delta \phi_l} \mathbf{\dot{p}}_{d} \\
	v_{vo} &= \mathbf{\dot{p}}_{d}' \cdot \frac{\mathbf{\dot{p}}_{o}}{||\mathbf{\dot{p}}_{o}||}	
\end{align}

\noindent where $\mathbf{R}_{\Delta \phi_l}$ is the rotation matrix for a $\Delta \phi_l$ rotation about $\mathbf{z}_I$. If $v_{vo} < ||\mathbf{\dot{p}}_{o}||$ the vehicle adjusts heading by $\Delta \phi_{vo}$:
\begin{equation}
	\Delta \phi_{vo} = \sin^{-1}\left(\frac{||\mathbf{\dot{p}}_o||}{||\mathbf{\dot{p}}_{d}'||}\right) - \sin^{-1}\left(\frac{v_{vo}}{||\mathbf{\dot{p}}_{d}'||}\right)
\end{equation}

This heading change, $\Delta {\phi_{vo}}$, is then added to $\Delta \phi_l$ to produce new the candidate headings: 
\begin{equation}
	\label{EqPhiDFinal}
	\Delta \phi'_l = \left(\mathbf{z}_I \cdot \mathbf{z}_{\Delta \phi_l}\right) \Delta \phi_l + \left(\mathbf{z}_I \cdot \mathbf{z}_{\Delta \phi_{vo,l}}\right) \Delta {\phi_{vo,l}} 
\end{equation}

\noindent where $l=r,\theta$, and the overall circumnavigation directions, $\mathbf{z}_{\phi,r}$ and $\mathbf{z}_{\phi,\theta}$, are the circumnavigation directions of the larger heading change angle, $\Delta \phi'_l$ or $\Delta \phi_{vo,l}$. The vehicle uses $\Delta \phi'_l$ in Eq.~\ref{EqDelPhiFinal} for the two cases where $\mathbf{z}_{\phi,r} = \mathbf{z}_{\phi,\theta}$ to determine the final $\Delta \phi$.

\begin{figure}
	\begin{center}
		\includegraphics[width=3in]{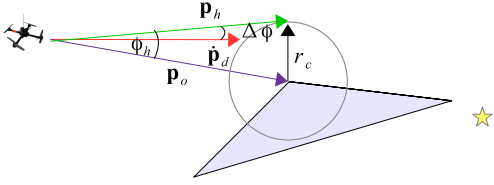}
		\caption{\label{FigDetermineDeltaPhi} Determination of $\Delta \phi$ for all the sensed obstacle points.}
	\end{center}
\end{figure}

\begin{figure}
	\begin{center}
		\includegraphics[width=3in]{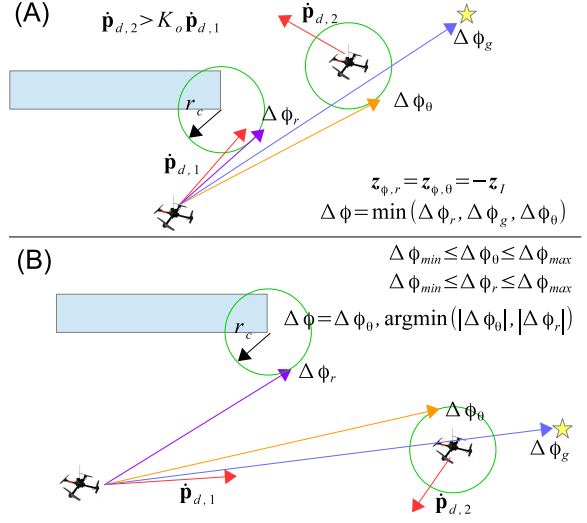}
		\caption{\label{FigDeltaPhiFinal} Example heading change scenarios. (A) The circumnavigation directions are equal so the max or min can be taken. (B) Both $\Delta \phi_r$ and $\Delta \phi_{\theta}$ satisfy the $\Delta \phi_{min}$ and $\Delta \phi_{max}$ constraints. To minimize heading change, $\Delta \phi = \mathrm{argmin} \left(|\Delta \phi_r|,~|\Delta \phi_{\theta}|\right)$.} 
	\end{center}
\end{figure} 

\subsection{Smooth heading and velocity transitions}
\label{SecSigmoid}
The trajectory generation utilizes sigmoid functions to transition from the current heading, $\phi$, and velocity, $v = ||\mathbf{\dot{p}}_{d}||$, to a new heading, $\phi_{n}$, and velocity, $v_{n}$. The hyperbolic tangent function, ($\tanh$), is chosen for its widespread use in generating smooth transitions \cite{Fischer2014}:
\begin{eqnarray}
			\label{EqSigPhi} \phi &=& c_1 \tanh(c_2 \tau - c_3) + c_4 \\
			\label{EqSigV} v &=& d_1 \tanh(d_2 \tau - d_3) + d_4 
\end{eqnarray}

\noindent where $c_i$ and $d_i$ are coefficients to be determined and $\tau$ is the sigmoid curve time (see Sec.~\ref{SecSafetyGuarantee}). The desired velocity vector is then
\begin{equation}
	\label{EqDotPdVec}
	\mathbf{\dot{p}}_d = \left[\begin{array}{c}
					v \cos \phi \\
					v \sin \phi
					\end{array}
					\right]
\end{equation} 

The coefficients can be solved analytically by considering the following assumptions: (1) each sigmoid function occurs over the time interval $\tau=0$ to $\tau=\tau_f$, and (2) since $\tanh$ asymptotically approaches -1 and 1, these are approximated by, -$\varepsilon_1$ and $\varepsilon_1$, (where we use $|\varepsilon_1| = 1-10^{-3}$ to minimize error ($<1$\%) and reduce $\tau_f$). The coefficient solutions are summarized as:
\begin{align}
	\label{EqCoeffSummary1}
	c_3 &= d_3 = \tanh^{-1}-\varepsilon_1 = 3.8\\
	\label{EqCoeffSummaryc2}
	c_2 &= d_2 = 2 c_3/\tau_{f_n} = 7.6/\tau_{f_n} \\
	\label{EqCoeffSummaryc1}
	c_1 &= c_4 = 0.5 \Delta \phi_n \\
	\label{EqCoeffSummaryEnd}
	d_1 &= d_4 = 0.5 \Delta v_n 
\end{align}

The sigmoid curves are summed during navigation so that the vehicle continues to utilize the most recent sensor information. In order to respect the vehicle thrust limitation, successive sigmoid curves match the slope of the previous sigmoid as estimated by a linear approximation as shown in Fig.~\ref{FigSummedSigExplain}. This concept is further defined in Theorem \ref{ThTauF} of Sec.~\ref{SecSafetyGuarantee}.

\begin{figure}
	\begin{center}
		\includegraphics[width=3in]{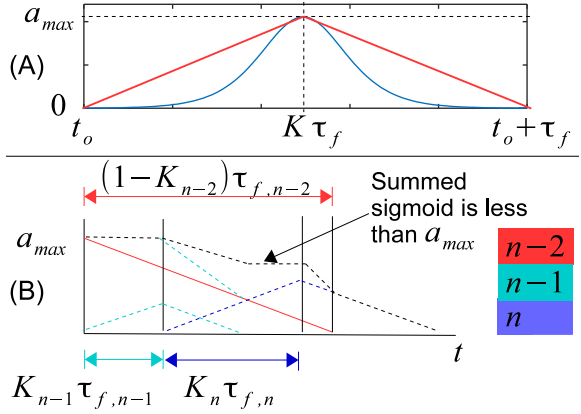}
		\caption{\label{FigSummedSigExplain} (A) The sigmoid curve ``slope" is approximated linearly. (B) Three sigmoid functions are summed together where the ``slopes" of curves $n-1$ and $n$ match the slope of $n-2$. The resulting function does not violate $a_{max}$. }
	\end{center}
\end{figure}

\section{Trajectory Guarantees}
\label{SecSafetyGuarantee}
%\subsection{Components of Thrust}
To guarantee the vehicle can navigate safely in the environment, we present Theorems \ref{ThTauF} and \ref{ThVc}, which define the sigmoid curve timespan and bound the maximum velocity, respectively. %The reader is directed to \cite{ColeArXivIROS2017} for the proofs. 

To aid theorem development we define the available planar force (assumption \ref{AssumptionPlanar}) and the drag force:
\begin{align}
\label{EqFplanarDefinition}
	f_{planar} &= \sqrt{f_{max}^2 - (mg)^2} \\
	\label{EqFdrag}
	\mathbf{f}_{w} &= K_d ||\mathbf{v}_{w}||^2 (-\mathbf{x}_W) \\
	\label{EqKdDefinition}
	K_d &= \frac{1}{2} \rho C_{D} A_{x_W}
\end{align}

\noindent where $m$ is the vehicle mass, $g$ is gravity, $\mathbf{v}_w = \mathbf{\dot{p}} - \mathbf{v}_{air}$ is the resistive wind velocity between the vehicle and the air, $\mathbf{x}_W$ is the wind frame axis aligned with $\mathbf{v}_w$, $\rho$ is the air density, $C_{D}$ is the coefficient of drag, and $A_{x_W}$ is the cross sectional area normal to the resultant drag velocity vector.

\begin{theorem}
\label{ThTauF}
Let $\tau_f$ for the $n^{th}$ sigmoid be defined as
\begin{equation}
\label{EqTauFTh}
\tau_{f,n} = \left\{\begin{array}{ll}
\frac{2c_3}{a_{max}}\sqrt{S_{traj}} , &t_i \geq t_{o,n-1}+\tau_{f,n-1} \\
\sqrt{\frac{2c_3 \sqrt{S_{traj}}}{h_{n-1}}} ,& t_i < t_{o,n-1} + \tau_{f,n-1}
\end{array}
\right.
\end{equation}

\noindent where $t_i$ is the current time, $t_{o,n-1}$ is the previous sigmoid curve offset (Eq.~\ref{EqSigOffset}), $h_{n-1}$ is the approximated linear slope of the previous sigmoid (Eq.~\ref{EqSigSlope}), $v_{w,max} = \mathrm{max}(v_i,v_i + \Delta v) + v_{air}$, $v_i$ is the current velocity, and $S_{traj}$ is a term of the heading and velocity change variables (Eq.~\ref{EqStraj}):
{\small
\begin{align}
	\label{EqSigOffset}
	t_{o,n} &= \mathrm{max}(t_i,t_{o,n-1}+K_{n-1}\tau_{f,n-1}) \\
	\label{EqSigSlope}
	h_n &= \left\{\begin{array}{ll}
	\frac{a_{max}}{(1-K)\tau_{f,n}}, & t_i \geq t_{o,n-1} + \tau_{f,n-1} \\
	h_{n-1}, &  t_i < t_{o,n-1} + \tau_{f,n-1}
	\end{array}\right. \\
	a_{max} &= 1/m\left(f_{planar} - K_d v_{w,max}^2\right) \\
	\label{EqStraj}
	S_{traj} &= (c_1(d_1 H + d_4)(1-H^2))^2 + (d_1(1-H^2))^2
\end{align}
}
\noindent where $H$ is the real solution to
\begin{multline}
  -3c_1^2 d_1^2 H^3 - 5 c_1^2d_1 d_4 H^2 + \\
  (-2 c_1^2 d_4^2 + d_1^2 c_1^2 - 2 d_1^2) H + d_1 d_4 c_1^2 = 0
\end{multline}

\noindent that satisfies $|H| < \varepsilon_1$ and $K = 0.5(\tanh^{-1}(H)/c_3 + 1)$. Then, for this solution for $\tau_f$, the vehicle trajectory does not violate $f_{max}$ in the presence of a bounded disturbance $v_{air}$ that satisfies $v_{air} < \sqrt{f_{planar}/K_d}$.
\end{theorem}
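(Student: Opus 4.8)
The plan is to recast the thrust constraint as a bound on the planar acceleration magnitude $\|\ddot{\mathbf{p}}_d\|$ and then show that the prescribed $\tau_{f,n}$ forces this magnitude to stay at or below $a_{max}$. First I would differentiate the sigmoid parameterization. Writing $T=\tanh(c_2\tau-c_3)$ and using $c_2=d_2$, $c_3=d_3$ (Eqs.~\ref{EqCoeffSummary1}--\ref{EqCoeffSummaryc2}), the tangential and centripetal components of Eq.~\ref{EqDotPdVec} are $\dot v = d_1 d_2(1-T^2)$ and $v\dot\phi = (d_1 T + d_4)\,c_1 c_2(1-T^2)$, so that the cross terms cancel by orthogonality and $\|\ddot{\mathbf{p}}_d\|^2 = \dot v^2 + v^2\dot\phi^2 = d_2^2(1-T^2)^2\bigl[d_1^2 + c_1^2(d_1 T + d_4)^2\bigr] =: d_2^2\,g(T)$. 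Because $\tau$ runs over $[0,\tau_{f,n}]$, the argument $c_2\tau-c_3$ sweeps $[-c_3,c_3]$ and hence $T\in[-\varepsilon_1,\varepsilon_1]$.

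Next I would maximize $g(T)$ on $[-\varepsilon_1,\varepsilon_1]$. Setting $g'(T)=0$ (after removing the $(1-T^2)$ factor, whose zeros lie outside the range) reproduces exactly the cubic in the statement, whose admissible root is $H$; I would confirm this stationary point is the interior maximum by checking the sign of $g''$ and comparing against the endpoint values, which are negligible since $(1-\varepsilon_1^2)^2$ is of order $10^{-6}$. This gives $\max_T g(T)=g(H)=S_{traj}$ (Eq.~\ref{EqStraj}) and a peak acceleration $\|\ddot{\mathbf{p}}_d\|_{\max}=d_2\sqrt{S_{traj}}=(2c_3/\tau_{f,n})\sqrt{S_{traj}}$. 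Substituting the first branch of Eq.~\ref{EqTauFTh}, $\tau_{f,n}=(2c_3/a_{max})\sqrt{S_{traj}}$, collapses this peak to exactly $a_{max}$, disposing of the non-overlapping case.

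I would then close the force argument. Since the trajectory is planar (Assumption~\ref{AssumptionPlanar}) and gravity acts vertically, Newton's law requires a horizontal thrust $m\ddot{\mathbf{p}}_d-\mathbf{f}_w$ whose magnitude the triangle inequality bounds by $m\|\ddot{\mathbf{p}}_d\|+K_d\|\mathbf{v}_w\|^2$. Bounding $\|\mathbf{v}_w\|=\|\dot{\mathbf{p}}_d-\mathbf{v}_{air}\|\le \max(v_i,v_i+\Delta v)+v_{air}=v_{w,max}$ along the sigmoid and using the definition $m\,a_{max}+K_d v_{w,max}^2=f_{planar}$ shows the horizontal thrust never exceeds $f_{planar}$; combined with the vertical component $mg$, the total thrust obeys $\sqrt{(mg)^2+f_{planar}^2}=f_{max}$ by Eq.~\ref{EqFplanarDefinition}. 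The hypothesis $v_{air}<\sqrt{f_{planar}/K_d}$ is precisely what keeps the drag budget $f_{planar}-K_d v_{w,max}^2$ (hence $a_{max}$) nonnegative, so that a feasible maneuver exists at all.

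The step I expect to be the main obstacle is the overlapping branch ($t_i<t_{o,n-1}+\tau_{f,n-1}$), where the $n$th sigmoid's acceleration superposes on the still-decaying $(n-1)$th one and the two magnitudes add, so exact maximization no longer suffices. Here I would lean on the linear slope approximation of Fig.~\ref{FigSummedSigExplain}: the peak of $g(T)$ occurs at $\tau=K\tau_{f,n}$, which is exactly why $K=0.5(\tanh^{-1}(H)/c_3+1)$ appears, since $(\tanh^{-1}(H)+c_3)/c_2=K\tau_{f,n}$; the decaying flank is then approximated by the constant slope $h_{n-1}$ of Eq.~\ref{EqSigSlope}, and the second branch of Eq.~\ref{EqTauFTh} is equivalent to $a_{peak,n}=h_{n-1}\tau_{f,n}$, i.e.\ the new peak, spread over its own duration, is matched to that slope so the superposed profile does not overshoot $a_{max}$. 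The delicate part is proving the superposition stays \emph{at or below} $a_{max}$ rather than merely meeting it at the matching instant; this hinges on showing the tangent-line estimate conservatively bounds the true sigmoid acceleration over the overlap interval, and it is here that I would spend the most care, likely invoking the $\varepsilon_1$ truncation to control the exponential tails.
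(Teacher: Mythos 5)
Your proposal follows essentially the same route as the paper's proof: you locate the peak of $\|\mathbf{\ddot{p}}_d\|$ by a stationarity condition that reproduces exactly the paper's cubic in $H$ and its $S_{traj}$, convert the thrust budget $m\,a_{max} = f_{planar} - K_d v_{w,max}^2$ (with drag and acceleration maximized independently and assumed aligned) into the first branch of Eq.~\ref{EqTauFTh}, and then obtain the second branch from the slope-matching condition $a_{max,n} = h_{n-1}\tau_{f,n}$ for overlapping sigmoids. The differences are presentational --- you parameterize by $T=\tanh(c_2\tau-c_3)$ instead of time and add endpoint/second-order checks the paper omits --- and the delicate overlap-case issue you flag is treated in the paper no more rigorously than in your sketch: it simply asserts that matching the linearly approximated slope of the previous sigmoid prevents exceeding $a_{max}$.
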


\begin{proof}
The sigmoid curve timespan is constrained by the vehicle's maximum thrust. The planar force is defined as
\begin{equation}
\label{EqFplanarTauFProof}
	\mathbf{f}_{planar} = m\mathbf{\ddot{p}} + \mathbf{f}_{w}
\end{equation}

\noindent where the maximum magnitude of $\mathbf{f}_{planar}$ will be when $\mathbf{\ddot{p}}$ and $\mathbf{f}_w$ are aligned. Each term will be maximized independently which gives a conservative solution for $\tau_f$. Using Eqs.~\ref{EqDotPdVec}, \ref{EqFdrag}, and \ref{EqFplanarTauFProof}, we write the following inequality
\begin{eqnarray}
	||\mathbf{f}_{planar}|| \geq m \sqrt{v^2 \dot{\phi}^2 + \dot{v}^2} + K_d v_{w,max}^2 
\end{eqnarray}

\noindent where $v_{w,max} = \mathrm{max}\left(v_i,v_i + \Delta v\right) + v_{air}$, and $v_i$ is the current velocity. The maximum acceleration from the trajectory will be where $\frac{d||\mathbf{\ddot{p}}_d||}{dt} = 0$. This is expanded as follows:

\begin{align}
	\frac{d||\mathbf{\ddot{p}}_d||}{dt}& = 0 \\
	\frac{d}{dt} \sqrt{v^2 \dot{\phi}^2 + \dot{v}^2} &= 0 \\
%	\frac{1}{2}\left(v^2 \dot{\phi}^2 + \dot{v}^2 \right)^{\frac{-1}{2}}\left( 2 v^2 \dot{\phi} \ddot{\phi} + 2 \dot{\phi}^2 v \dot{v} + 2 \dot{v} \ddot{v}\right)& =& 0 \\
%	\frac{2 v^2 \dot{\phi} \ddot{\phi} + 2 \dot{\phi}^2 v \dot{v} + 2 \dot{v} \ddot{v}}{2\sqrt{v^2 \dot{\phi}^2 + \dot{v}^2 }} &=& 0 \\
%	2 v^2 \dot{\phi} \ddot{\phi} + 2 \dot{\phi}^2 v \dot{v} + 2 \dot{v} \ddot{v} &=& 0 \\
	 \label{EqMaxDdotPdSig} v^2 \dot{\phi} \ddot{\phi} +  \dot{\phi}^2 v \dot{v} + \dot{v} \ddot{v} &= 0 
\end{align}

\noindent Equation \ref{EqMaxDdotPdSig} is given in terms of the heading and velocity, but since $\tau_f$ is not known the heading and velocity are also not known. We introduce two new variables, $K$ and $H$, to simplify notation and facilitate a solution. First, $\tau_{max} = K \tau_f$, where $0 \leq K \leq 1$ and $\tau_{max}$ is the value of $\tau$ where $||\mathbf{\ddot{p}}_d||$ is maximum. Typically $K$ will be around 0.5. Second, $H$ defines the common $\tanh$ term in each sigmoid function at the maximum value. The relationships are given as follows:

\begin{align}
	H &= \tanh(c_2\tau - c_3) = \tanh(d_2 \tau - d_3) \\
	&=\tanh \left(\frac{2 c_3}{\tau_f} K \tau_f - c_3 \right) \\
	&= \tanh \left(c_3(2K-1) \right) \\
\label{EqKSol}	K &=\frac{1}{2}\left(\frac{\tanh^{-1}H}{c_3}+1\right)
\end{align}

\noindent Now the sigmoid functions can be substituted into Eq.~\ref{EqMaxDdotPdSig} and simplified as follows:
{\small
\begin{align}
	v^2 \dot{\phi} \ddot{\phi} +  \dot{\phi}^2 v \dot{v} + \dot{v} \ddot{v} &= 0 \\
	(d_1H+d_4)^2\left(c_1c_2(1-H^2)\right)\left(-2c_1c_2^2H(1-H^2)\right) + & \nonumber \\ \left(c_1c_2(1-H^2)\right)^2(d_1H + d_4))\left(d_1 d_2(1-H^2)\right) + & \nonumber \\
	\left(d_1 d_2(1-H^2)\right)\left(-2 d_1 d_2^2 H (1-H^2)\right) &= 0 \\
%	c_2^3 (1-H^2)^2 \left[(d_1H + d_4)^2 (-2c_1^2 H) + \left(c_1^2 (d_1H + d_4) d_1 (1-H^2)\right) -2d_2^2 H \right] &=& 0\\
%	(-2 c_1^2 H)(d_1H + d_4)^2 + d_1 c_1^2(d_1H + d_4) (1-H^2) - 2d_1^2 H &=&0 \\
%	-2 c_1^2 H (d_1^2 H^2 + 2 d_1 d_4 H + d_4^2) + d_1 c_1^2(d_1 H + d_4 - d_1 H^3 - d_4 H^2) - 2 d_1^2 H &=&0 \\
%	-2 c_1^2 d_1^2 H^3 - 4 c_1^2 d_1 d_4 H^2 - 2 c_1^2 d_4^2 H + d_1^2 c_1^2 H + d_1 d_4 c_1^2 - d_1^2 c_1^2 H^3 - d_1 d_4 c_1^2 H^2 - 2 d_1^2 H &=& 0 \\
	\label{EqDerivMax0} -3c_1^2 d_1^2 H^3 - 5 c_1^2d_1 d_4 H^2 + &\nonumber \\ (-2 c_1^2 d_4^2 + d_1^2 c_1^2 - 2 d_1^2) H + &\nonumber \\ d_1 d_4 c_1^2 &= 0
\end{align}
} 
\noindent The final result in Eq.~\ref{EqDerivMax0} is a third order polynomial in $H$. Since all the coefficients are known, the roots can be determined. To be a solution, the roots must be real and satisfy $|H| < \varepsilon_1$.

It should also be noted that the definition for $d_4$ is modified from the sigmoid coefficient definition in Eq.~\ref{EqCoeffSummaryEnd} for this proof to include the current velocity
\begin{eqnarray}
	\label{EqAlternateD4}
	d_4 &=& v_i + \frac{1}{2} \Delta v 
\end{eqnarray}

We now have relationships for the most aggressive part of the trajectory. Next, the drag term is considered where the maximum is:
\begin{equation}
	\label{EqVresDragMax}
	v_{w,max} = \mathrm{max}\left(v_i,~v_i+\Delta v\right) + v_{air}
\end{equation}

To facilitate the solution for $\tau_f$ we also define
\begin{align}
	\label{EqAmax}
	a_{max} &= \frac{1}{m}\left(f_{planar} - K_d v_{w,max}^2\right) \\
	\label{EqStraj}
	S_{traj} &= (c_1(d_1 H + d_4)(1-H^2))^2 + (d_1(1-H^2))^2
\end{align}

Utilizing the solution of $H$ from Eq.~\ref{EqDerivMax0}, the sigmoid function definitions, and Eqs.~\ref{EqVresDragMax}, \ref{EqAmax}, and \ref{EqStraj}, we re-examine Eq.~\ref{EqFplanarTauFProof}, and make the following substitutions
\begin{align}
	&f_{planar} \geq m\sqrt{v^2 \dot{\phi}^2 + \dot{v}^2} + K_d v_{w,max}^2 \\
	&a_{max}^2 \geq c_2^2 S_{traj} \\
	\label{EqTauF1Proof}
	&\tau_{f,n} \geq \frac{2 c_3}{a_{max}}\sqrt{S_{traj}}
\end{align}

Equation \ref{EqTauF1Proof} is utilized to maximize the planar thrust. It is therefore only appropriate when previous sigmoid curves have already completed. When the current sigmoid function is being summed with previous sigmoid functions that have not yet finished, a linear approximation of the previous sigmoid curve slope is used to determine the maximum acceleration and defined as
\begin{equation}
	h_n = \left\{\begin{array}{ll}
	\frac{a_{max}}{(1-K)\tau_{f,n}}, & t_i \geq t_{o,n-1} + \tau_{f,n-1} \\
	h_{n-1}, &  t_i < t_{o,n-1} + \tau_{f,n-1}
	\end{array}\right.
\end{equation}

If the current sigmoid matches the slope of the previous sigmoid, then the maximum thrust will not be exceeded. The maximum acceleration for the current sigmoid is a function of the previous sigmoid slope and $\tau_{f,n}$ and defined by
\begin{equation}
\label{EqAmax2}
	a_{max,n} = h_{n-1} \tau_{f,n}
\end{equation}

Substituting Eq.~\ref{EqAmax2} into Eq.~\ref{EqTauF1Proof} and simplifying produces the following
\begin{align}
	\tau_{f,n} &\geq \frac{2 c_3}{h_{n-1}\tau_{f,n}}\sqrt{S_{traj}} \\
	\tau_{f,n}^2 &\geq \frac{2 c_3}{h_{n-1}}\sqrt{S_{traj}}\\
	\label{EqTauF2Proof}
	\tau_{f,n} &\geq \sqrt{\frac{2 c_3}{h_{n-1}}\sqrt{S_{traj}}}
\end{align}

The two solutions for $\tau_{f,n}$ are summarized in Eq.~\ref{EqTauFTh}.

\end{proof}

\begin{theorem}
\label{ThVc}
Let the vehicle's maximum cruise velocity be defined as
\vspace{-.135in}
\begin{equation}
\label{EqThvc}
v_c = \mathrm{min}\left(v_{c,v},~v_{c,s}\right)
\end{equation}
\noindent where $v_{c,v}$ is the minimum real, positive solution of 
{\small
\begin{equation}
\label{EqFplanarInequalityTh}
	\left(\frac{m}{r_{min}} + K_d\right) v_{c,v}^2 + 2K_d v_{air} v_{c,v} + v_{air}^2 - f_{planar} = 0
\end{equation}
}
\noindent and $v_{c,s}$ is solved simultaneously with the sigmoid curve timespan, $\tau_f$, from the following two equations:
{\small
\begin{align}
\label{EqThVcSenseInequality}
& \int_0^{\tau_f} v_{c,s} \sin \phi(t) dt \leq r_s -r_c - v_{o,max} \tau_f - v_{c,s} \Delta T_s \\
& 
\label{EqVelBoundTF2}
	\tau_{f} = 
	\left\{\begin{array}{ll}
	\frac{ c_3 m\Delta \phi_2 v_c}{f_{planar} - K_d (v_c + v_{air})^2}+\frac{1}{2}\tau_{f,1}, & 2\Delta T_s \geq \tau_{f,1} \\
	\sqrt{\frac{c_3 m\tau_{f,1}\Delta \phi_2 v_c}{2(f_{planar} - K_d (v_c + v_{air})^2)}}+\frac{1}{2}\tau_{f,1}, & 2\Delta T_s < \tau_{f,1}
	\end{array}
	\right.
\end{align}
}

\noindent where the minimum turn radius, $r_{min}$ is user or vehicle defined, $v_{o,max}$ is the expected maximum obstacle velocity, and
{\small
\begin{align}
	& \Delta \phi_1 = \cos^{-1}\left(\frac{r_s - \Delta T_s(v_{o,max} - v_{c,s})}{r_s}\right) + \sin^{-1}\left(\frac{r_c}{r_s}\right) \\
	& \Delta \phi_2 = \pi/2 + \sin^{-1}\left(v_{o,max}/v_{c,s}\right) - \Delta \phi_1 \\
	& \tau_{f,1} = \frac{ c_3 m\Delta \phi_1 v_{c,s}}{f_{planar} - K_d (v_{c,s} + v_{air})^2}
\end{align}
}
\noindent Then, for this solution for $v_c$ the vehicle does not violate $f_{max}$ when making a turn of radius $ r_t \geq r_{min}$ for a bounded disturbance $v_{air}$ applied in any direction that satisfies $v_{air} < \sqrt{f_{planar}/K_d}$.
\end{theorem}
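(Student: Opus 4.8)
The plan is to recognize that $v_{c,v}$ and $v_{c,s}$ enforce two \emph{independent} necessary conditions, so that $v_c=\min(v_{c,v},v_{c,s})$ satisfies both at once. The thrust guarantee in the statement follows directly from the $v_{c,v}$ branch, while the $v_{c,s}$ branch guarantees that the hypothesis $r_t\ge r_{min}$ is actually maintainable under the sensor/communication limits of Assumption~\ref{AssumpSensorComms}. I would therefore prove each branch separately and conclude by taking the minimum.

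For the thrust branch I would write the worst-case planar force balance for steady circular motion, paralleling Eq.~\ref{EqFplanarTauFProof}. During a turn of radius $r_t$ the centripetal demand is $m v^2/r_t$, which is largest at $r_t=r_{min}$; the drag term $K_d\|\mathbf{v}_w\|^2$ is largest when the wind directly opposes motion, giving relative speed $v+v_{air}$; and $\|\mathbf{f}_{planar}\|$ is maximal when these two contributions align. Imposing $m v^2/r_{min}+K_d(v+v_{air})^2\le f_{planar}$ and collecting powers of $v$ reproduces the quadratic Eq.~\ref{EqFplanarInequalityTh}, whose minimum real positive root is $v_{c,v}$. Monotonicity of the left-hand side in both $v$ and $1/r_t$ then extends the bound to every $r_t\ge r_{min}$ and every $v\le v_{c,v}$, and the hypothesis $v_{air}<\sqrt{f_{planar}/K_d}$ is precisely the condition that keeps the constant term negative, so that a positive root exists (the vehicle can overcome the wind at rest).

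For the sensor branch I would fix the worst-case encounter admitted by Assumption~\ref{AssumpLessCapable}: an obstacle first detected at the sensor boundary $r_s$, travelling at $v_{o,max}<v_c$ on the heading that most reduces the available reaction distance. The vehicle must turn so its closest approach exceeds $r_c$; decomposing the required heading change into an alignment angle $\Delta\phi_1$ (turning until the clearance circle is tangent to the relative path, accounting for the sample-period closure $\Delta T_s(v_{o,max}-v_{c,s})$ and the half-angle $\sin^{-1}(r_c/r_s)$) and a completion angle $\Delta\phi_2$ gives the stated $\Delta\phi$ expressions. The turn durations $\tau_{f,1}$ and $\tau_f$ then follow from Theorem~\ref{ThTauF} specialised to a pure heading change: with $\Delta v=0$ the cubic forces $H=0$, so $S_{traj}=(c_1 d_4)^2=(\tfrac12\Delta\phi\,v_c)^2$ and $a_{max}=(f_{planar}-K_d(v_c+v_{air})^2)/m$, reproducing $\tau_f=c_3 m\,\Delta\phi\,v_c/(f_{planar}-K_d(v_c+v_{air})^2)$ in the two cases of Eq.~\ref{EqVelBoundTF2}, distinguished (as in Theorem~\ref{ThTauF}) by whether the maneuver is summed with a not-yet-finished prior curve, i.e.\ $2\Delta T_s\gtrless\tau_{f,1}$. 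The feasibility requirement is that the lateral displacement accrued during the turn, $\int_0^{\tau_f}v_{c,s}\sin\phi(t)\,dt$ with $\phi(t)$ the sigmoid of Eq.~\ref{EqSigPhi}, fit inside the shrinking gap $r_s-r_c-v_{o,max}\tau_f-v_{c,s}\Delta T_s$, which is exactly Eq.~\ref{EqThVcSenseInequality}; solving this together with the $\tau_f$ relation yields $v_{c,s}$.

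I expect the sensor branch to be the main obstacle, for two reasons. First, one must argue that the chosen geometry really is the worst case over all constant-velocity obstacle headings permitted by Assumption~\ref{AssumpLessCapable}, rather than merely \emph{a} case; this is a geometric optimisation that is easy to get subtly wrong at the clearance-circle tangency defining $\Delta\phi_1$. Second, the displacement integral $\int_0^{\tau_f}v_{c,s}\sin\phi(t)\,dt$ over a $\tanh$ heading has no elementary closed form and is implicitly coupled to $\tau_f$, which itself depends on $v_{c,s}$ through $\Delta\phi_1$ and $\Delta\phi_2$; hence Eqs.~\ref{EqThVcSenseInequality}--\ref{EqVelBoundTF2} must be solved \emph{simultaneously} (and numerically), not in closed form. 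By contrast the thrust branch is a routine quadratic. Finally, taking $v_c=\min(v_{c,v},v_{c,s})$ makes both the thrust inequality and the sensing inequality hold, which establishes the stated guarantee.
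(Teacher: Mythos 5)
Your proposal follows essentially the same route as the paper's own proof: the thrust branch via the worst-case aligned centripetal-plus-drag force balance yielding the quadratic of Eq.~\ref{EqFplanarInequalityTh} (with $v_{air}<\sqrt{f_{planar}/K_d}$ as the positive-root condition), and the sensor branch via the head-on worst-case encounter, the $\Delta\phi_1$/$\Delta\phi_2$ decomposition, the specialization of Theorem~\ref{ThTauF} with $\Delta v = 0$ to obtain $\tau_{f,1}$ and the two-case $\tau_f$ of Eq.~\ref{EqVelBoundTF2}, and the simultaneous (numerical) solution with the displacement inequality, concluding with the minimum of the two bounds. Your additional remarks (the explicit $H=0$ root, monotonicity in $r_t$ and $v$, and the unproven claim that the head-on geometry is truly the worst case) are refinements of, not departures from, the paper's argument.
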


\begin{proof}
The first constraint on $v_c$ is due to the vehicle thrust limitations. The maximum values for each of the components in Eq.~\ref{EqFdesVec} are considered. 
\begin{equation}
	\label{EqFdesVec}
	\mathbf{f}_{d} = m \mathbf{g} + \mathbf{R}_{WI}\mathbf{f}_{w} + \overbrace{\mathbf{f}_{n} + \mathbf{f}_{t}}^{m\mathbf{\ddot{p}}}
\end{equation}

The worst case drag force, $f_{w,max}$ occurs when the vehicle is traveling at $v_c$ into the wind. Likewise, the maximum normal force occurs for the vehicle's tightest turning radius $r_{min}$. Finally, because the vehicle is at its maximum cruise velocity, $\mathbf{f}_{t,max} = 0$. The maximum values of each of the components are summarized in Eqs.~\ref{EqFwindmaxMag} to \ref{EqFtangMaxMag}. 
\begin{align}
	\label{EqFwindmaxMag}
	f_{w,max} & =  \frac{1}{2} \rho C_{D} A_{x_W} v_{w-max}^2 = K_d v_{w-max}^2 \\
	f_{n,max} & =  m \frac{v_c^2}{r_{min}} \\
	%\mathbf{n}_{,-max} &=& \frac{\mathbf{\dot{p}}}{v_c} \times \mathbf{z}_I \\
	\label{EqFtangMaxMag}
	f_{t,max} &= 0
\end{align}

\noindent where $v_{w,max} = v_c + {v}_{air}$ and we assume that the area normal to $x_W$, $A_{x_W}$, and the drag coefficient, $C_D$ are known. 

The planar force vector is defined by
\begin{equation}
	\mathbf{f}_{planar} = \mathbf{f}_n + \mathbf{f}_{w}
\end{equation}

\noindent where the maximum magnitude of $\mathbf{f}_{planar}$ occurs when $\mathbf{f}_n$ and $\mathbf{f}_w$ are aligned: 

\begin{equation}
\label{EqFplanarInequalityProof}
	||\mathbf{f}_{planar}|| \geq \frac{mv_c^2}{r_{min}} + K_d \left(v_c + v_{air}\right)^2
\end{equation}

\noindent Equation \ref{EqFplanarInequalityProof} is equivalent to Eq.~\ref{EqFplanarInequalityTh}.

Since Eq.~\ref{EqFplanarInequalityTh} is a quadratic in $v_c$, we can write 
\begin{equation}
	a_{vc} v_c^2 + b_{vc} v_c + c_{vc} =0
\end{equation}

\noindent
\begin{align}
	a_{vc} &= \frac{m}{r_{min}} + K_d \\
	b_{vc} &= 2 K_d v_{air} \\
	c_{vc} &= K_d v_{air}^2 - f_{planar}
\end{align}

\noindent The roots are then  
\begin{equation}
	v_c = \frac{ -b_{vc} \pm \sqrt{b_{vc}^2 - 4 a_{vc} c_{vc}}}{2 a_{vc}} 
\end{equation}

\noindent The solution for $v_c$ must be real and positive which means $b_{vc}^2 - 4 a_{vc} c_{vc} \geq 0$ and $-b_{vc} + \sqrt{b_{vc}^2 - 4 a_{vc} c_{vc}} > 0$. Re-arranging these two inequalities gives
\begin{align}
	4a_{vc} c_{vc} &\leq b_{vc}^2 \\
	\label{EqLimitingCase}
	4a_{vc} c_{vc} &< 0
\end{align}

\noindent which shows that the second inequality is the more restrictive constraint. Since $a_{vc} > 0$, Eq.~\ref{EqLimitingCase} reduces to $c_{vc} \leq 0$. Solving for $v_{air}$ gives

\begin{align}
	c_{vc} &\leq 0 \\
	K_d v_{air}^2 - f_{planar} &\leq 0 \\
	\label{EqVairMax}
	v_{air}& \leq \sqrt{\frac{f_{planar}}{K_d}}
\end{align}

The second constraint on $v_c$ is due to sensor limitations. We consider a vehicle traveling towards an obstacle where the velocity vector of the vehicle is opposite the velocity vector of the obstacle. In the worst case scenario the vehicle is $r_s + \varepsilon$ away from the obstacle and thus does not sense it. After $\Delta T_s$ the sensor will identify the obstacle and a heading change determined. The heading change is
\begin{equation}
	 \Delta \phi_1 = \cos^{-1} \left(\frac{r_s - \Delta T_s(v_{o,max} + v_c)}{r_s}\right) + \sin^{-1} \frac{r_c}{r_s}
\end{equation}

After $2\Delta T_s$ the vehicle makes an estimate of the obstacle velocity and determines the remaining heading change to match the component of the vehicle velocity in the obstacle's direction to the obstacle velocity. This heading change is
\begin{equation}
	\Delta \phi_2 = \pi/2 + \sin^{-1}\left(\frac{v_o}{v_c}\right) - \Delta \phi_1
\end{equation}

Since the minimum distance between the vehicle and obstacle monotonically approaches $r_c$, the following inequality must hold 
\begin{equation}
\label{EqSensorIntInequality}
	\int_0^{\tau_f} v_{c} \sin \phi(t) dt \leq r_s - v_{o,max} \tau_f - v_{c} \Delta T_s - r_c
\end{equation}

\noindent when Eq.~\ref{EqSensorIntInequality} is re-arranged it is equivalent to Eq.~\ref{EqThVcSenseInequality}. This equation has two unknowns in $v_{c}$ and $\tau_f$. The solution for $\tau_f$ is dependent on Eq.~\ref{EqTauFTh}. For the initial heading change, $t \geq t_{o,n-1} + K_{n-1}\tau_{f,n-1}$ since there is no $n-1$ sigmoid curve. Additionally since $\Delta v =0$, $S_{traj}$ simplifies to $(c_1 d_4)^2 = (c_1 v_c)^2$. Since $v_c$ is unknown, $a_{max}= 1/m(f_{planar} - K_d(v_c + v_{air})^2)$. Eq.~\ref{EqTauFTh} can be simplified as:
\begin{equation}
\label{EqVelBoundTauF}
\tau_f = \frac{2 c_3 m c_1 v_c}{f_{planar} - K_d (v_c + v_{air})^2}
\end{equation}

\noindent Substituting Eq.~\ref{EqCoeffSummaryc1} into Eq.~\ref{EqVelBoundTauF} gives the initial timespan as
\begin{equation}
\label{EqVelBoundTF1}
		\tau_{f,1} = \frac{ c_3 m\Delta \phi_1 v_c}{f_{planar} - K_d (v_c + v_{air})^2}
	\end{equation}
	
\noindent The second sigmoid heading change of $\Delta \phi_2$ will also be solved by Eq.~\ref{EqTauFTh}, but it is unknown whether $\tau_{f,1} \geq 2 \Delta T_s$. If it is not, then:
\begin{equation}
	h_{n-1} = \frac{1/m(f_{planar} - K_d (v_c + v_{air})^2)}{1/2 \tau_{f,1}}
\end{equation}

\noindent and the sigmoid curve timespan is defined as
\begin{multline}
\label{EqVelBoundTF2}
	\tau_{f} = \\
	\left\{\begin{array}{ll}
	\frac{ c_3 m\Delta \phi_2 v_c}{f_{planar} - K_d (v_c + v_{air})^2}+\frac{1}{2}\tau_{f,1}, & 2\Delta T_s \geq \tau_{f,1} \\
	\sqrt{\frac{c_3 m\tau_{f,1}\Delta \phi_2 v_c}{2(f_{planar} - K_d (v_c + v_{air})^2)}}+\frac{1}{2}\tau_{f,1}, & 2\Delta T_s < \tau_{f,1}
	\end{array}
	\right.
\end{multline}

\noindent Equations \ref{EqSensorIntInequality} and \ref{EqVelBoundTF2} must be solved simultaneously for $v_{c}$ and $\tau_f$.

Once both constraints have been considered, the cruise velocity is the minimum value as defined by Eq.~\ref{EqThvc}.
\end{proof}

\subsection{Goal Position Convergence}
The vehicle continues to head towards the goal position, $\mathbf{p}_g$, and once $||\mathbf{\dot{p}}_g|| = 0$, the vehicle reaches the goal position in finite time. We define $e_{\phi_g} = \phi_g - \phi$ as the error in the heading angle towards the goal position and $r_g$ as the distance to the goal position. The following statements can be made:

\begin{enumerate}
	\item When the vehicle starts moving it is headed towards the goal position, $e_{\phi_g} = 0$, and is some distance, $r_g > 0$ away. 
	\item The vehicle maneuvers around obstacles and other vehicles and from Assumption \ref{AssumpLessCapable}, $|e_{\phi_g}| > 0$ for a finite time. Once the obstacles have been cleared $e_{\phi_g} \rightarrow 0$ and $r_g \rightarrow 0$.
\end{enumerate}

\section{Vehicle and Controller}
\label{SecVehController}
The vehicle dynamics for a quadrotor are given in Eqs.~\ref{EqEOMLin} and \ref{EqEOMAng}. Equation \ref{EqEOMLin} is written in the inertial frame, and Eq.~\ref{EqEOMAng} is written in the body frame: 
\begin{align}
	\label{EqEOMLin}
	m\mathbf{\ddot{p}} &= \mathbf{f} + m\mathbf{g} + \mathbf{d}_{p} \\
	\label{EqEOMAng}
	\mathbf{J}\mathbf{\dot{\omega}} &= \mathbf{\omega} \times \mathbf{J} \mathbf{\omega} + \mathbf{u} + \mathbf{R}_{IB}\mathbf{d}_{\omega}
\end{align}

\noindent where $\mathbf{f}$ is the total thrust, $\mathbf{d}_p$ is the translational disturbance (including drag), $\mathbf{J}$ is the vehicle moment of inertia, $\mathbf{\dot{\omega}}$ is the rotational acceleration, $\mathbf{u}$ is the total torque, $\mathbf{R}_{IB}$ is the rotation matrix from the inertial to body frame, and $\mathbf{d}_{\omega}$ is the rotational disturbance. The control inputs are the vehicle force, $\mathbf{f}$, and torque, $\mathbf{u}$. 

The vehicle dynamics also include aerodynamic effects on the propellers like thrust reduction from propeller inflow velocity \cite{Leishman2006} and blade flapping \cite{Hoffmann}.

The vehicle controller uses an inner- and outer-loop control similar to \cite{Bialy2013, Cao2016} where the outer loop controls translation and the inner loop controls rotation. The outer loop uses a nonlinear robust integral of the sign of the error (RISE) controller \cite{FischerNL2014} (Eqs.~\ref{EqFischerControl} to \ref{EqFischere2}) and the inner loop uses PID control \cite{Cao2016} (Eq.~\ref{EqPID}):
\begin{align}
	\label{EqFischerControl}
	\mathbf{f} &= (k_s + 1) \mathbf{e}_2 - (k_s + 1) \mathbf{e}_2(0) + \nu \\
	\mathbf{\dot{\nu}} &= (k_s + 1) \alpha_2 \mathbf{e}_2 + \beta \mathrm{sign}(\mathbf{e}_2) \\
		\label{EqFischere2}
	\mathbf{e}_2 &= \mathbf{\dot{e}}_1 + \alpha_1 (\mathbf{p}_d - \mathbf{p}) \\
	\label{EqPID}
	\mathbf{u} &= k_p \mathbf{q}_d + k_i \int \mathbf{q}_d dt + k_d \mathbf{\dot{q}_d}
\end{align}

\noindent where $k_s >0$ and $\alpha_2 > 1/2$ are translational control gains, and $k_p ,k_i,k_d > 0$ are PID control gains for desired Euler angles, $\mathbf{q}_d$, determined from $\mathbf{f}$.

\section{Simulation Results}
\label{SecSim}
To demonstrate the algorithm capabilities, we show a scenario where two vehicles navigate into a building to different goal positions. There is a bounded mean wind disturbance of 2 m/s outside the building, a transition zone on entering the building, and no wind inside. The wind field uses the Von K\'arm\'an power spectral density and is spatially correlated \cite{Cole2013}. Both vehicles have the same parameters: $f_{max} = 10.2$N, $m$ = 0.54kg, $J = \mathrm{diag}([0.0017,0.0017,0.0031])$ kg/m$^2$, $C_d$ = 1.7, $r_s$ = 10m, $\Delta T_s = 1$s, $r_c$ = 2m, and $A_{x_w}= 0.2$m$^{2}$. The maximum cruise velocity for both vehicles is solved from Theorem 2 as $v_c$ = 1.83 m/s, and from Sec.~\ref{SubSecVehManeuver} vehicle 1 maneuvers around vehicle 2. Figure \ref{FigSimEx1OverView} shows an overview of the vehicles' trajectories, Fig.~\ref{FigExSim} shows snapshots of vehicle navigation, and Fig.~\ref{FigSimEx1VehPhi} shows smooth heading changes. 

The vehicle clears the obstacle by greater than $r_c$, and the thrust constraint is not violated. The computation time to take the sensor input and generate a trajectory is approximately 0.5 seconds for $>160$ sensor points when run on a laptop computer (Matlab 2015b, 2.8GHz processor, 8 GB RAM). It is expected that the computation time would be significantly reduced if implemented as compiled code.%The conservative nature of the $\tau_f$ solution is displayed in that the trajectory does not require $f_{max}$ because the wind is not directly opposite the vehicle velocity.
\begin{figure}
	\begin{center}
		\includegraphics[width=3in]{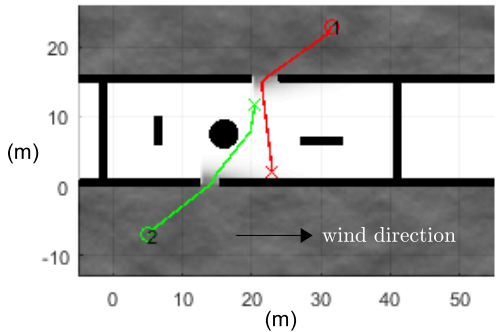}
		\caption{\label{FigSimEx1OverView} Overview of vehicles moving into a building in the presence of a bounded wind disturbance (shown here at one instance in time). The vehicles clear all obstacles by $r_c$ and do not violate $f_{max}$.}
	\end{center}
\end{figure}

\begin{figure}
	\begin{center}
		\includegraphics[width=3in]{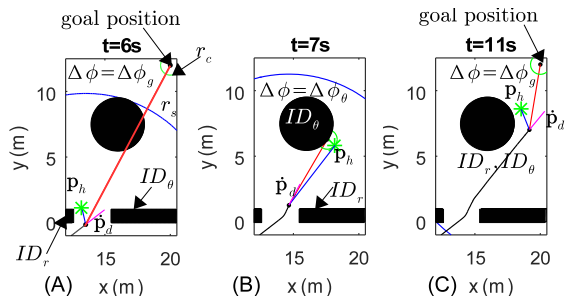}
		\caption{\label{FigExSim} Snapshots of the vehicle maneuvering in the environment. (A) The vehicle navigates through a window or door in the building. (B) The vehicle identifies the next obstacle to maneuver around. (C) The vehicle has a clear line to the goal position, safely clearing the obstacle.}
	\end{center}
\end{figure}

\begin{figure}
	\begin{center}
		\includegraphics[width=2.5in]{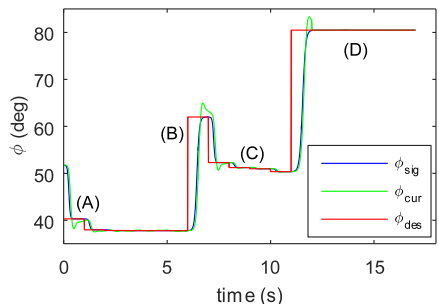}
		\caption{\label{FigSimEx1VehPhi} Smooth heading changes for the vehicle as it navigates an unknown environment at $v_c$. (A) Initial adjustment on detection of the obstacle, (B) Heading change to come through the building opening (C) Heading adjustment for the circular obstacle, and (D) Heading change toward goal position.}
	\end{center}
\end{figure}

\section{Conclusion and Future Work}
\label{SecConclusion}
The trajectory generator presented navigates a vehicle in an unknown environment while avoiding obstacles and other vehicles and respecting the vehicle's physical limitations. The vehicle uses its sensor and communication inputs to compute heading changes to avoid obstacles by a prescribed distance. The sigmoid functions used to transition heading and velocity provide smooth motion and incorporate the heading changes from each sensor update by matching the sigmoid slopes and summing the curves. Similarly, the vehicle incorporates the estimated wind disturbance, thrust limitations, and sensor constraints to solve for the sigmoid curve time intervals and bound the maximum safe cruise velocity. The simulation demonstrates these properties, showing smooth transitions and respecting maximum required force.

The trajectory generation presented could be extended to 3D motions by rotating the plane in which the vehicle traverses, or combining the planar motion described with a separate altitude trajectory. The thrust required for altitude adjustment could be accounted for independently, thus reducing the thrust available for planar motion. The combination of the planar and altitude trajectories would produce a 3D trajectory that respects the thrust constraints. Additional areas for exploration include relaxing the assumption of perfect sensor information, including rotational disturbances, and incorporating this trajectory generator into a higher level formation controller. 

\bibliographystyle{unsrt}
\bibliography{C:/Users/cole/Documents/ColeThesisWork/Documentation/ReferencesBibFiles/Controls}

\end{document}